\theoremstyle{definition}
\newtheorem{definition}{Definition}[section]
\newtheorem{theorem}[definition]{Theorem}
\newtheorem{lemma}[definition]{Lemma}
\newtheorem{proposition}[definition]{Proposition}
\newtheorem{claim}[definition]{Claim}
\newtheorem{corollary}[definition]{Corollary}
\newtheorem{conjecture}[definition]{Conjecture}
\author{Gyula Y.~Katona\affiliationmark{1}\thanks{The research of the first author was supported by National Research, Development and Innovation Office NKFIH, K-116769 and K-124171, by the National Research, Development and Innovation Fund (TUDFO/51757/2019-ITM, Thematic Excellence Program), and by the Higher Education Excellence Program of the Ministry of Human Capacities in the frame of Artificial Intelligence research area of Budapest University of Technology and Economics (BME FIKP-MI/SC).}
\and Kitti Varga\affiliationmark{1,2,3}\thanks{The research of the second author was supported by National Research, Development and Innovation Office NKFIH, K-124171.}}
\title[Minimal toughness in special graph classes]{Minimal toughness in special graph classes}
\affiliation{
  Department of Computer Science and Information Theory, Budapest University of Technology and Economics, Hungary\\
  Alfr\'{e}d R\'{e}nyi Institute of Mathematics, Hungary\\
  ELKH-ELTE Egerv\'{a}ry Research Group, Hungary}
\keywords{toughness, minimally tough graphs, chordal graphs, split graphs, claw-free graphs, $2K_2$-free graphs}
\begin{document}
\publicationdata{vol. 25:3 special issue ICGT'22}{2023}{1}{10.46298/dmtcs.10180}{2022-10-20; 2022-10-20; 2023-06-22}{2023-10-19}

\maketitle
\begin{abstract}
 Let $t$ be a positive real number. A graph is called $t$-tough if the removal of any vertex set $S$ that disconnects the graph leaves at most $|S|/t$ components, and all graphs are considered 0-tough. The toughness of a graph is the largest $t$ for which the graph is $t$-tough, whereby the toughness of complete graphs is defined as infinity. A graph is minimally $t$-tough if the toughness of the graph is $t$, and the deletion of any edge from the graph decreases the toughness. In this paper, we investigate the minimum degree and the recognizability of minimally $t$-tough graphs in the classes of chordal graphs, split graphs, claw-free graphs, and $2K_2$-free graphs.
\end{abstract}

\section{Introduction}

All graphs considered in this paper are finite, simple and undirected. Let $\omega(G)$ denote the \emph{number of components}, $\Delta(G)$ the \emph{maximum degree} and $\kappa(G)$ the \emph{connectivity number} of the graph $G$. (Using $\omega(G)$ to denote the number of components might be confusing; most of the literature on toughness, however, uses this notation.) For a connected graph $G$, a vertex set $S \subseteq V(G)$ is called a \emph{cutset} if its removal disconnects the graph.

The notion of toughness was introduced by~\cite{article:toughness_intro} to investigate Hamiltonicity.

\begin{definition}
 Let $t$ be a real number. A graph $G$ is called \emph{$t$-tough} if $|S| \ge t \cdot \omega(G-S)$ holds for any vertex set $S \subseteq V(G)$ that disconnects the graph (i.e., for any $S \subseteq V(G)$ with $\omega(G-S) > 1$). The \emph{toughness} of $G$, denoted by $\tau(G)$, is the largest $t$ for which G is $t$-tough, taking $\tau(K_n) = \infty$ for all $n \ge 1$.
 
 We say that a cutset $S \subseteq V(G)$ is a \emph{tough set} if $\omega(G - S) = |S|/\tau(G)$.
\end{definition}

Note that a graph is disconnected if and only if its toughness is 0.

Obviously, the more edges a graph has, the larger its toughness can be. The main focus of this work is on graphs whose toughness decreases whenever any of their edges is deleted.

\begin{definition}
 A graph $G$ is said to be \emph{minimally $t$-tough} if $\tau(G) = t$ and $\tau(G - e) < t$ for all $e \in E(G)$.
\end{definition}

For instance, every complete graph on at least two vertices is minimally $\infty$-tough. Note that since the toughness of any noncomplete graph is a rational number, there exist no minimally tough graphs with irrational toughness.

It follows directly from the definition that every $t$-tough, noncomplete graph is $2t$-connected, where $t$ is a nonnegative real number, implying $\kappa(G) \ge 2 \tau(G)$ for noncomplete graphs. Therefore, the minimum degree of any $t$-tough, noncomplete graph is at least $\lceil 2t \rceil$ for any nonnegative real number.

The following conjecture is motivated by a theorem of~\cite{article:min_k_connected}, which states that every minimally $k$-connected graph has a vertex of degree $k$, where $k$ is a positive integer.

\begin{conjecture}[Kriesell,~\cite{article:kriesell_conjecture}] \label{conj:kriesell}
 Every minimally $1$-tough graph has a vertex of degree $2$.
\end{conjecture}

This conjecture can be naturally generalized to any positive real number~$t$.

\begin{conjecture}[Generalized Kriesell conjecture] \label{conj:general_kriesell}
 Every minimally $t$-tough graph has a vertex of degree $\lceil 2t \rceil$, where $t$ is a positive real number.
\end{conjecture}

In the article by~\cite{article:min1tough}, we proved that every minimally $1$-tough graph has a vertex of degree at most $n/3 + 1$. Moreover, we also showed that Conjecture~\ref{conj:kriesell} holds for claw-free graphs by proving that every minimally 1-tough, claw-free graph is a cycle of length at least four. However, in general, determining whether a graph is minimally $t$-tough is hard: in the paper by~\cite{article:dp}, we proved that this problem is DP-complete for any positive rational number $t$. We remark that DP-complete problems are believed to be even harder than NP-complete ones; for more details about the complexity class DP, see the article by~\cite{article:dp_intro}.

Proving Conjecture~\ref{conj:general_kriesell} in general seems to be difficult, so the main motivation of this paper is to study the conjecture in some special graph classes. In the classes of chordal graphs, split graphs, and claw-free graphs, we can prove the conjecture for some values of $t$ by giving a characterization of the minimally $t$-tough graphs in the special graph class, which easily implies the affirmative answer to the conjecture. In the case of $2K_2$-free graphs, the conjecture remains open, but we can at least show that these graphs can be recognized in polynomial time, which may give a chance to find a characterization later. The interesting property of these graph classes is that they are not closed under edge-deletion.

We prove that Conjecture~\ref{conj:general_kriesell} holds
\begin{itemize}
 \item[--] for minimally $t$-tough, chordal graphs when $t \le 1$ is an arbitrary positive rational number,
 \item[--] for minimally $t$-tough, split graphs when $t$ is an arbitrary positive rational number,
 \item[--] for minimally $t$-tough, claw-free graphs when $t \le 1$ is an arbitrary positive rational number.
\end{itemize}

In addition, we show that
\begin{itemize}
 \item[--] minimally $t$-tough, split graphs for any positive rational number $t$, and
 \item[--] minimally $t$-tough, claw-free graphs for any positive rational number $t \le 1$, and
 \item[--] minimally $t$-tough, $2K_2$-free graphs for any positive rational number $t$
\end{itemize}
can be recognized in polynomial time.

Moreover, we give a characterization
\begin{itemize}
 \item[--] of minimally $t$-tough, chordal graphs when $t \in (1/2, 1]$ is an arbitrary rational number (more precisely, we show that there exist no such graphs),
 \item[--] of minimally $t$-tough, split graphs when $t$ is an arbitrary positive rational number (more precisely, we show that there exist such graphs only when $t$ is the reciprocal of an integer $b \ge 2$),
 \item[--] of minimally $t$-tough, claw-free graphs when $t \le 1$ is an arbitrary positive rational number (by a theorem of Matthews and Sumner, we know that the toughness of a noncomplete, claw-free graph is either an integer or a half-integer, so we only need to study the cases when $t=1/2$ or $t=1$, and the latter case was already handled in our article~\cite{article:min1tough}).
\end{itemize}

\section{Preliminaries}

In this section, we study some basic properties of minimally tough graphs.

The following proposition is a simple observation.

\begin{proposition} \label{prop:obs_below_1}
 Let $t \le 1$ be a positive rational number and $G$ a graph with $\tau(G) = t$. Then 
 \[ \omega(G-S) \le |S|/t \]
 for any nonempty proper subset $S$ of $V(G)$.
\end{proposition}
\begin{proof}
 If $S$ is a cutset in $G$, then by the definition of toughness, $\omega(G-S) \le |S|/t$ holds.

 If $S$ is not a cutset in $G$, then $\omega(G-S) = 1$ since $S \ne V(G)$. On the other hand, $|S|/t \ge 1$ since $S \ne \emptyset$ and $t \le 1$. Therefore, $\omega(G-S) \le |S|/t$ holds in this case as well.
\end{proof}

As is clear from its proof, the above proposition holds even if $S$ is not a cutset. However, it does not necessarily hold if $t > 1$ and $S$ is not a cutset: if $t>1$, then the graph cannot contain a cut-vertex, therefore $\omega(G-S) = 1$ for any subset $S$ with $|S| = 1$, while $|S|/t = 1/t < 1$.

The following proposition describes the basic structure of minimally tough graphs.

\begin{proposition} \label{prop:minttoughlemma}
 Let $t$ be a positive rational number and $G$ a minimally $t$-tough graph. For every edge $e$ of $G$,
 \begin{enumerate}
  \item[--] the edge $e$ is a bridge in $G$, or
  \item[--] there exists a vertex set ${S=S(e) \subseteq V(G)}$  with 
 \[ \omega(G-S) \le \frac{|S|}{t} \quad \mathrm{and} \quad \omega \big( (G-e)-S \big) > \frac{|S|}{t}, 
 \]
 and the edge $e$ is a bridge in $G-S$.
 \end{enumerate}
 In the first case, we define $S = S(e) = \emptyset$.
\end{proposition}
\begin{proof}
 Let $e$ be an arbitrary edge of $G$ which is not a bridge. Since $G$ is minimally $t$-tough, ${\tau (G-e) < t}$. Since $e$ is not a bridge, $G-e$ is still connected, so there exists a cutset $S=S(e) \subseteq V(G-e)=V(G)$ in $G-e$ satisfying $\omega \big( (G-e)-S \big) > |S|/t$.
 
 By Proposition \ref{prop:obs_below_1}, if $t \le 1$, then $\omega (G-S) \le |S|/t$. So assume that $t > 1$ holds. Then there are two cases.
 
 \medskip
 
 \emph{Case 1:} ($t > 1$ and) $S$ is a cutset in $G$.
 
 Since $\tau (G) = t$ and $S$ is a cutset, $\omega (G-S) \le |S|/t$. This is only possible if $e$ connects two components of $(G-e)-S$, i.e. if $e$ is a bridge in $G-S$.
 
 \medskip
 
 \emph{Case 2:} ($t > 1$ and) $S$ is not a cutset in $G$. 
 
 Then $\omega(G-S) = 1$. Since $S$ is a cutset in $G-e$, the edge $e$ must connect two components of $(G-e)-S$. Thus $e$ is a bridge in $G-S$ and $\omega \big( (G-e) - S \big) = 2$.
 
 Now we show that $\omega(G-S) \le |S|/t$ holds. Suppose to the contrary that $\omega(G-S) > |S|/t$. Since $\omega(G-S) = 1$, this implies $|S| < t$. Moreover, since $\tau(G) = t$, the graph $G$ is $\lceil 2t \rceil$-connected, thus it has at least $2t+1$ vertices. From this, it follows that $S$ and one of the endpoints of $e$ form a cutset in~$G$: otherwise, $G$ would only have $|S|+2 < t+2 < 2t+1$ vertices (where the latter inequality is valid since $t > 1$). Let $S'$ denote this cutset. Since $G$ is $t$-tough and $S'$ is a cutset in $G$, we obtain
 \[ 2 \le \omega(G-S') \le \frac{|S'|}{t} = \frac{|S|+1}{t}, \]
 so $|S| \ge 2t-1$. Therefore,
 \[ 2t-1 \le |S| < t, \]
 which implies $t < 1$, and that is a contradiction.

 \medskip
 
 So in both cases,
 \[ \omega(G-S) \le \frac{|S|}{t} \quad \mathrm{and} \quad \omega \big( (G-e)-S \big) > \frac{|S|}{t} \]
 hold, and $e$ is a bridge in $G-S$.
\end{proof}

The starting point of each of our proofs is showing that for each edge~$e$ of the investigated graphs, there exists a vertex set $S(e)$ guaranteed by Proposition~\ref{prop:minttoughlemma} with some nice properties.

\section{Chordal graphs}

First, we study chordal graphs. Unlike for the other graph classes considered in this paper, the complexity of determining the toughness of chordal graphs is open.

\begin{definition}
 A graph is \emph{chordal} if it does not contain an induced cycle of length at least $4$.
\end{definition}

\begin{definition}
 A vertex $v$ of a graph $G$ is \emph{simplicial} if its neighborhood $N(v)$ forms a clique in $G$.
\end{definition}

A classical theorem of~\cite{article:rigid_circuit} states that every chordal graph has a simplicial vertex; moreover, every noncomplete, chordal graph contains at least two nonadjacent simplicial vertices.

The main idea of the following proof is that in a minimally $t$-tough, chordal graph with $t \le 1$, if $e$ is an edge in the neighborhood of a simplicial vertex $v$, then the vertex set $S(e)$ guaranteed by Proposition~\ref{prop:minttoughlemma} should, but cannot, contain $v$.

\begin{theorem} \label{thm:chordal}
 Let $t \le 1$ be a positive rational number. If $t \le 1/2$, then every simplicial vertex of any minimally $t$-tough, chordal graph has degree~1. If $1/2 < t \le 1$, then there exist no minimally $t$-tough, chordal graphs.
\end{theorem}
\begin{proof}
 If $t \le 1/2$, then let $G$ be a minimally $t$-tough, chordal graph, and if $1/2 < t \le 1$, then suppose to the contrary that $G$ is a minimally $t$-tough, chordal graph.

 In both cases, let $v$ be a simplicial vertex of $G$. If $t \le 1/2$, then suppose to the contrary that $v$ has degree at least 2. If $1/2 < t \le 1$, then every vertex of $G$ has degree at least $2t > 1$, so $v$ has degree at least 2.
 
 In both cases, let $u$ and $w$ be two neighbors of $v$. Since $v$ is simplicial, $u$ and $w$ are adjacent. Let $e = uw$. Obviously, $e$ is not a bridge, so by Proposition~\ref{prop:minttoughlemma}, there exists a vertex set $S = S(e) \subseteq V(G)$ such that
 \[ \omega(G-S) \le \frac{|S|}{t} \qquad \mathrm{and} \qquad \omega \big( (G-e) - S \big) > \frac{|S|}{t}, \]
 and $e$ is a bridge in $G-S$. Clearly, $S$ must contain $v$.
 
 \medskip
 
 \emph{Case 1:} $|S| \le 1$, i.e.\ $S = \{ v \}$.
 
 Since $v$ is simplicial, $(G-e) - \{v\}$ has exactly two components, and $v$ has exactly one neighbor in both of them, so the degree of $v$ is exactly $2$.
 
 If $t \le 1/2$, then
  \[ 2 = \omega \big( (G-e) - S \big) > \frac{|S|}{t} \ge \frac{1}{1/2} = 2, \]
 which is a contradiction.
 
 If $1/2 < t \le 1$, then $G - \{ u \}$ or $G - \{ w \}$ is not connected (since $G \not\simeq K_3$), which contradicts the fact that $\kappa(G) \ge 2t > 1$.
 
 \medskip
 
 \emph{Case 2:} $|S| \ge 2$.
 
 Since $v$ is simplicial, $\omega(G-S) = \omega \big( G-(S \setminus \{v\}) \big)$ holds. Since $\tau(G) = t \le 1$ and $S \setminus \{v\} \ne \emptyset$, Proposition \ref{prop:obs_below_1} implies
 \[ \omega \big( G - (S \setminus \{ v \}) \big) \le \frac{\big| S \setminus \{ v \} \big|}{t} = \frac{|S| - 1}{t}. \]
 Hence,
 \[ \frac{|S|}{t} < \omega \big( (G-e) - S \big) = \omega(G-S) + 1 = \omega \big( G- (S \setminus \{v\}) \big) + 1 \le \frac{|S| - 1}{t} + 1 = \frac{|S|}{t} + \left( 1 - \frac{1}{t} \right) \le \frac{|S|}{t}, \]
 where the inequality $1-1/t \le 0$ is valid since $t \le 1$, thus we obtained a contradiction.
\end{proof}

Thus, Conjecture~\ref{conj:general_kriesell} is true for minimally $t$-tough, chordal graphs with $t \le 1$.

Now it is natural to ask if there exist minimally $t$-tough, chordal graphs with $t \le 1/2$. \cite{article:chordal_mintough} proved that the answer is affirmative if and only if $t$ is a reciprocal of an integer $b \ge 2$. Moreover, they gave a complete characterization of minimally $t$-tough, chordal graphs with $t \le 1/2$.

\begin{theorem}[\cite{article:chordal_mintough}]
 A chordal graph is minimally $t$-tough for some positive rational number $t \le 1/2$ if and only if it can be obtained from a tree with maximum degree $\Delta \ge 3$ by removing an independent set $W$ of vertices of degree 3 and connecting the three neighbors of each vertex of $W$ by a triangle, where $W$ satisfies the following.
 \begin{itemize}
  \item[--] Either $\Delta = 3$ and $W$ is the set of all vertices of degree 3 such that every neighbor of any vertex in $W$ has degree 2,
  \item[--] or $\Delta \ge 3$ and $W$ is some subset of vertices of degree 3 such that every neighbor of any vertex in $W$ has degree $\Delta \ge 3$.
 \end{itemize}
\end{theorem}

The above theorem implies that for any positive rational number $t \le 1/2$, the class of minimally $t$-tough, chordal graphs can be recognized in polynomial time.

\section{Split graphs}

In this section, we study split graphs. It is not difficult to see that every split graph is chordal, thus the results of the previous section can be applied here.

\begin{definition}
 A graph is a \emph{split} graph if its vertex set can be partitioned into a clique and an independent set.
\end{definition}

The toughness of split graphs can be computed in polynomial time; first, this was shown for $t=1$ by~\cite{article:split_1}, then by~\cite{article:split_general} for all positive rational numbers~$t$.

\begin{theorem}[\cite{article:split_general}] \label{thm:split_toughness}
 For any positive rational number $t$, the class of $t$-tough, split graphs can be recognized in polynomial time.
\end{theorem}

Note that if an edge goes between the clique and the independent set of the split graph, then after the removal of this edge, the graph is still a split graph, so we can compute in polynomial time whether the toughness decreased. The following lemma says that the vertex sets showing that the removal of any edge in the clique decreases the toughness can be uniquely determined.

\begin{lemma} \label{lemma:split_mintoughset}
 Let $t$ be a positive rational number and $G$ a minimally $t$-tough, split graph partitioned into a clique $Q$ and an independent set $I$. Let $e = uv$ be an edge between two vertices of $Q$ and $S = S(e) \subseteq V(G)$ a vertex set guaranteed by Proposition~\ref{prop:minttoughlemma}. Then 
 \[ S = \big( Q \setminus \{ u, v \} \big) \cup \big\{ w \in I \bigm| uw, \, vw \in E(G) \big\}. \]
\end{lemma}
\begin{proof}
 Let $S = S(e) \subseteq V(G)$ be an arbitrary vertex set guaranteed by Proposition~\ref{prop:minttoughlemma} (such a vertex set exists since $G$ is minimally $t$-tough), and let $S_0 = \big( Q \setminus \{ u, v \} \big) \cup \big\{ w \in I \bigm| uw, \, vw \in E(G) \}$. Now we show that $S = S_0$ holds.
 
 By Proposition~\ref{prop:minttoughlemma}, the edge $e$ must be a bridge in $G-S$, so $u, v \notin S$ and $S_0 \subseteq S$.
 
 Suppose to the contrary that $S_0 \subsetneqq S$. Note that in $(G-e)-S_0$, every vertex except $u$ and $v$ has degree at most 1, so since $u, v \notin S$, we obtain
 \[ \omega(G-S) = \omega(G-S_0) - |S \setminus S_0| \le \omega(G-S_0) - 1, \]
 where the last inequality holds by the assumption $S_0 \subsetneqq S$. Then by Proposition~\ref{prop:minttoughlemma},
 \[ \frac{|S_0|}{t} < \frac{|S|}{t} < \omega \big( (G-e)-S \big) = \omega(G-S) + 1 \le \omega(G-S_0) \le \frac{|S_0|}{t}, \]
 which is a contradiction.
\end{proof}
 
The next theorem states that except for the complete graphs, there exist no minimally $t$-tough, split graphs with $t > 1/2$. The main idea of the proof is to show that if such a graph existed with a partitioning $Q \cup I$ of its vertex set, where $Q$ spans a clique and $I$ an independent set, then every vertex of $Q$ would have at most one neighbor in $I$, but these split graphs cannot be minimally $t$-tough.

\begin{theorem}
 For any rational number $t > 1/2$, there exist no minimally $t$-tough, split graphs.
\end{theorem}
\begin{proof}
 Since every split graph is chordal, we can assume by Theorem~\ref{thm:chordal} that $t > 1$ holds. Suppose to the contrary that there exists a minimally $t$-tough, split graph $G$. Obviously, $G$ is noncomplete, otherwise its toughness would be infinity and not $t$. Let $G$ be partitioned into a clique $Q$ and an independent set $I$. Since $G$ is noncomplete, $I \ne \emptyset$ and since $G$ is connected and noncomplete, $Q \ne \emptyset$ also holds.
 
 Now we show that we can assume $|I| \ge 2$. We already know that $I \ne \emptyset$, so let us consider the case when $|I| = 1$ holds. Since $G$ is noncomplete, we can pick a vertex $w \in Q$ which is not adjacent to the vertex of $I$. Then we can consider $I' = I \cup \{ w \}$ and $Q' = Q \setminus \{ w \}$, instead of $I$ and $Q$.
 
 Note that $|Q| \ge 2$, otherwise $G \simeq K_{1,b}$ for some $b \ge 2$, and so $\tau(G) = 1/b \le 1/2$, which would be a contradiction. So let $e=uv$ be an edge in $Q$ and $S = S(e) \subseteq V(G)$ a vertex set guaranteed by Proposition~\ref{prop:minttoughlemma}. By Lemma~\ref{lemma:split_mintoughset},
 \[ S = \big( Q \setminus \{ u, v \} \big) \cup \{ w \in I \mid uw,vw \in E(G) \}. \]
 Then every component of $G-S$ has size $1$ except for the component of the edge $e$. Let
 \begin{align*}
  x & = \big| \{ w \in I \mid uw,vw \in E(G) \} \big|, \\
  \ell_u & = \big| \{ w \in I \mid uw \in E(G), ~ vw \not\in E(G) \} \big|, \\
  \ell_v & = \big| \{ w \in I \mid uw \not\in E(G), ~ vw \in E(G) \} \big|,
 \end{align*}
 see Figure~\ref{fig:split_x_l}.
 
 \begin{figure}[ht] 
 \begin{center}
 \begin{tikzpicture}
  \tikzstyle{vertex}=[draw,circle,fill=black,minimum size=4,inner sep=0]
  
  \draw[thick](0,0) ellipse (0.6 and 1.5);
  
  \node[vertex] (v1) at (0,0.9) {};
  \node[vertex] (v2) at (0,0.3) {};
  
  \node[vertex] (u1) at (1.5,1.8) {};
  \node[vertex] (u2) at (1.5,1.4) {};
  
  \node[vertex] (u3) at (1.5,0.8) {};
  \node[vertex] (u4) at (1.5,0.4) {};
  
  \node[vertex] (u5) at (1.5,-0.2) {};
  \node[vertex] (u6) at (1.5,-0.6) {};
  
  \node[vertex] at (0,-0.2) {};
  \node[vertex] at (0,-0.6) {};
  \node[vertex] at (0,-1) {};
  
  \node[vertex] at (1.5,-1.2) {};
  \node[vertex] at (1.5,-1.6) {};
  
  \node at (-0.5,1.75) {$Q$};
  \node at (1.25,2.25) {$I$};
  \node at (-0.3,0.9) {$u$};
  \node at (-0.3,0.3) {$v$};
  
  \draw[thick] (v1) -- (v2) node[pos=0.5, right, xshift=-2pt] {$e$};
  \draw[thick] (v1) -- (u1);
  \draw[thick] (v1) -- (u2);
  \draw[thick] (v1) -- (u3);
  \draw[thick] (v1) -- (u4);
  \draw[thick] (v2) -- (u3);
  \draw[thick] (v2) -- (u4);
  \draw[thick] (v2) -- (u5);
  \draw[thick] (v2) -- (u6);
  
  \draw[thick] [decorate,decoration={brace,amplitude=3pt}] (1.85,1.9) -- (1.85,1.3) node [pos=0.5, xshift=12pt] {$\ell_u$};
  \draw[thick] [decorate,decoration={brace,amplitude=3pt}] (1.85,0.9) -- (1.85,0.3) node [pos=0.5, xshift=12pt] {$x$};
  \draw[thick] [decorate,decoration={brace,amplitude=3pt}] (1.85,-0.1) -- (1.85,-0.7) node [pos=0.5, xshift=12pt] {$\ell_v$};
  
  \draw[thick] (-0.75,-1.75) rectangle (0.75,0.1);
  \draw[thick] (1.35,0.25) rectangle (1.65,0.95);
  
  \node at (-1,-1.5) {$S$};
 \end{tikzpicture}
 \caption{The set $S = S(e)$ consisting of the vertices of $Q \setminus \{ u, v \}$ and the common neighbors of $u$ and $v$.}
 \label{fig:split_x_l}
 \end{center}
 \end{figure}
 
 Since $|I| \ge 2$ holds, $Q$ is clearly a cutset in $G$, so
 \[ x + \big( \omega(G-S) - 1 \big) \le |I| = \omega(G - Q) \le \frac{|Q|}{t}. \]
 By Proposition~\ref{prop:minttoughlemma},
 \[ \omega(G-S) \le \frac{|S|}{t} = \frac{|Q| - 2 + x}{t} < \omega \big( (G-e) - S \big) = \omega(G-S) + 1. \]
 Thus
 \[ x-2 + \frac{|Q| - 2 + x}{t} < x-2 + \big( \omega(G-S) + 1 \big) = x + \big( \omega(G-S) - 1 \big) \le \frac{|Q|}{t}, \]
 i.e.,
 \[ x-2 + \frac{x-2}{t} < 0. \]
 Since $t$ is positive and $x$ is an integer, this implies $x \le 1$.
 
 Now we show $l_u \le 1$. Since $S$ is a cutset in $G$ and $u$ is not an isolated vertex in $G-S$, it follows that $S \cup \{ u \}$ is also a cutset in $G$. Thus,
 \[ \omega(G-S) + l_u = \omega \big( G - (S \cup \{ u \}) \le \frac{|S \cup \{ u \}|}{t} = \frac{|S|+1}{t} \]
 holds since $G$ is $t$-tough. On the other hand, by Proposition~\ref{prop:minttoughlemma},
 \[ \omega(G-S) + l_u = \omega \big( (G-e) - S \big) - 1 + l_u > \frac{|S|}{t} + l_u - 1. \]
 Therefore,
 \[ \frac{|S|}{t} + l_u - 1 < \frac{|S|+1}{t}, \]
 i.e., $l_u < 1 + 1/t < 2$, which implies $l_u \le 1$.
 
 Similarly, $l_v \le 1$.
 
 Now we show that $u$ has at most one neighbor in $I$, i.e., $l_u + x \le 1$. Suppose to the contrary that $u$ has more neighbors in $I$. By the above observations, this is only possible if $x=1$ and $l_u = 1$. Then the vertex set $S' = (S \setminus I) \cup \{ u \}$ is a cutset in $G$ and $\omega(G-S') = \omega(G-S) + 1$. In addition, since $G$ is $t$-tough, $\omega(G-S') \le |S'|/t$ holds. Then by Proposition~\ref{prop:minttoughlemma},
 \[ \frac{|S|}{t} < \omega \big( (G-e) - S \big) = \omega(G-S) + 1 = \omega(G-S') \le \frac{|S'|}{t} = \frac{|S|}{t}, \]
 which is a contradiction.
 
 Since $e$ is an arbitrary edge in $Q$, it follows that every vertex of $Q$ has at most one neighbor in $I$. By a previous assumption, $|I| \ge 2$, and note that $G$ is connected (otherwise its toughness would be 0). Therefore, there exists an edge $f \in E(Q)$ both of whose endpoints have a neighbor in $I$ but not a common one. Then by Lemma~\ref{lemma:split_mintoughset},
 \[ \big| S(f) \big| = |Q| - 2 \]
 holds, so clearly 
 \[ \omega \big( G - S(f) \big) = |I|-1, \qquad \omega \big( (G - f) - S(f) \big) = |I|, \]
 and by Proposition~\ref{prop:minttoughlemma},
 \[ |I|-1 \le \frac{|Q| - 2}{t} < |I|. \]
 
 On the other hand, let $a \in I$ be fixed, and let
 \[ R = R(a) = \{ v \in V(Q) \mid av \not\in E(G) \}, \]
 see Figure~\ref{fig:split_R}.
 
 \begin{figure}[ht] 
 \begin{center}
 \begin{tikzpicture}
  \tikzstyle{vertex}=[draw,circle,fill=black,minimum size=4,inner sep=0]
  
  \draw[thick](0,0) ellipse (0.6 and 2);
  
  \node[vertex] (v1) at (0,1.75) {};
  \node[vertex] (v2) at (0,1.25) {};
  \node[vertex] (v3) at (0,0.75) {};
  \node[vertex] (v4) at (0,0.25) {};
  \node[vertex] (v5) at (0,-0.25) {};
  \node[vertex] (v6) at (0,-0.75) {};
  \node[vertex] (v7) at (0,-1.25) {};
  \node[vertex] (v8) at (0,-1.75) {};
  
  \node[vertex] (u1) at (1.5,1.25) {};
  \node[vertex] (u2) at (1.5,0) {};
  \node[vertex] (u3) at (1.5,-1) {};
  
  \node at (-0.25,2.5) {$Q$};
  \node at (1.25,2) {$I$};
  
  \node at (1.85,1.25) {$a$};
  
  \draw[thick] (u1) -- (v1);
  \draw[thick] (u1) -- (v2);
  \draw[thick] (u1) -- (v3);
  \draw[thick] (u2) -- (v4);
  \draw[thick] (u2) -- (v5);
  \draw[thick] (u3) -- (v6);
  \draw[thick] (u3) -- (v7);
  
  \draw[thick] (-0.75,-2.25) rectangle (0.75,0.5);
  
  \node at (-1.1,-2) {$R$};
 \end{tikzpicture}
 \caption{The set $R = R(a)$ consisting of those vertices of $Q$ that are not adjacent to $a$.}
 \label{fig:split_R}
 \end{center}
 \end{figure}
 
 Since $\kappa(G) \ge 2t$, the vertex $a$ has degree at least $2t$, so $|R| \le |Q| - 2t$. Since $|I| \ge 2$, the vertex set $R$ is a cutset in $G$, so
 \[ |I| = \omega(G - R) \le \frac{|R|}{t} \le \frac{|Q| - 2t}{t}. \]
 Hence,
 \[ \frac{|Q| - 2}{t} < |I| \le \frac{|Q| - 2t}{t}, \]
 which is a contradiction since $t > 1$.
\end{proof}

We have just shown that for any rational number $t > 1/2$, there exist no minimally $t$-tough, split graphs. However, note that complete graphs are minimally $\infty$-tough, split graphs. Now we characterize minimally $t$-tough, split graphs for $t \le 1/2$: in the following theorem, we show that $t$ must be a reciprocal of an integer $b \ge 2$, and the class of minimally $1/b$-tough, split graphs can be seen in Figure~\ref{fig:split_characterization}.

\begin{figure}[ht]
\begin{center}
\begin{tikzpicture}
 \tikzstyle{vertex}=[draw,circle,fill=black,minimum size=4,inner sep=0]
 
 \node[vertex] (1) at (0,0) {};
 \node[vertex] (2) at (1,1) {};
 \node[vertex] (3) at (1,0.5) {};
 \node[vertex] (4) at (1,0) {};
 \draw[fill] (1,-0.4) circle (0.3pt);
 \draw[fill] (1,-0.5) circle (0.3pt);
 \draw[fill] (1,-0.6) circle (0.3pt);
 \node[vertex] (5) at (1,-1) {};
 
 \draw[thick] (1) -- (2);
 \draw[thick] (1) -- (3);
 \draw[thick] (1) -- (4);
 \draw[thick] (1) -- (5);
 
 \draw[thick][decorate,decoration={brace,amplitude=5pt}] (1.3,1.1) -- (1.3,-1.1) node [pos=0.5, xshift=12pt] {$b$};
 
 \begin{scope}[shift={(4.25,0)}]
 \node[vertex] (1) at (0,0.5) {};
 \node[vertex] (2) at (0,-0.5) {};
 \node[vertex] (3) at (1,1.4) {};
 \node[vertex] (4) at (1,1) {};
 \draw[fill] (1,0.725) circle (0.3pt);
 \draw[fill] (1,0.625) circle (0.3pt);
 \draw[fill] (1,0.525) circle (0.3pt);
 \node[vertex] (5) at (1,0.25) {};
 \node[vertex] (6) at (1,-0.4) {};
 \draw[fill] (1,-0.8) circle (0.3pt);
 \draw[fill] (1,-0.9) circle (0.3pt);
 \draw[fill] (1,-1) circle (0.3pt);
 \node[vertex] (7) at (1,-1.4) {};
 
 \draw[thick] (1) -- (2);
 \draw[thick] (1) -- (3);
 \draw[thick] (1) -- (4);
 \draw[thick] (1) -- (5);
 \draw[thick] (2) -- (6);
 \draw[thick] (2) -- (7);
 
 \draw[thick] [decorate,decoration={brace,amplitude=5pt}] (1.3,1.5) -- (1.3,0.15) node [pos=0.5, xshift=22pt] {$b-1$};
 \draw[thick] [decorate,decoration={brace,amplitude=5pt}] (1.3,-0.3) -- (1.3,-1.5) node [pos=0.5, xshift=29pt] {$\le b-1$};
 \end{scope}
 
 \begin{scope}[shift={(9,0)}]
 \node[vertex] (1) at (0,1) {};
 \node[vertex] (2) at (0,0) {};
 \node[vertex] (3) at (0,-1) {};
 \node[vertex] (4) at (1,1.3) {};
 \draw[fill] (1,1.1) circle (0.3pt);
 \draw[fill] (1,1) circle (0.3pt);
 \draw[fill] (1,0.9) circle (0.3pt);
 \node[vertex] (5) at (1,0.7) {};
 \node[vertex] (6) at (1,0.3) {};
 \draw[fill] (1,0.1) circle (0.3pt);
 \draw[fill] (1,0) circle (0.3pt);
 \draw[fill] (1,-0.1) circle (0.3pt);
 \node[vertex] (7) at (1,-0.3) {};
 \node[vertex] (8) at (1,-0.7) {};
 \draw[fill] (1,-0.9) circle (0.3pt);
 \draw[fill] (1,-1) circle (0.3pt);
 \draw[fill] (1,-1.1) circle (0.3pt);
 \node[vertex] (9) at (1,-1.3) {};
 
 \draw[thick] (1) -- (2) -- (3);
 \draw[thick] (1) to [bend right=45] (3);
 \draw[thick] (1) -- (4);
 \draw[thick] (1) -- (5);
 \draw[thick] (2) -- (6);
 \draw[thick] (2) -- (7);
 \draw[thick] (3) -- (8);
 \draw[thick] (3) -- (9);
 
 \draw[thick] [decorate,decoration={brace,amplitude=5pt}] (1.25,1.4) -- (1.25,0.6) node [pos=0.5, xshift=22pt] {$b-1$};
 \draw[thick] [decorate,decoration={brace,amplitude=5pt}] (1.25,0.4) -- (1.25,-0.4) node [pos=0.5, xshift=22pt] {$b-1$};
 \draw[thick] [decorate,decoration={brace,amplitude=5pt}] (1.25,-0.6) -- (1.25,-1.4) node [pos=0.5, xshift=22pt] {$b-1$};
 \end{scope}
\end{tikzpicture}
\caption{The class of minimally $1/b$-tough, split graphs.}
\label{fig:split_characterization}
\end{center}
\end{figure}

\begin{theorem}
 Let $t \le 1/2$ be an arbitrary positive rational number and $G$ a minimally $t$-tough, split graph partitioned into a clique $Q$ and an independent set $I$. Then there exists an integer $b \ge 2$ for which $t = 1/b$, and $|Q| \le 3$. Moreover, 
 \begin{enumerate}
  \item either $G$ is a tree with at most two non-leaf vertices and with $\Delta(G) = b$,
  \item or $|Q| = 3$, every vertex in $I$ has degree $1$ and every vertex in $Q$ has degree $b+1$.
 \end{enumerate}
\end{theorem}
\begin{proof} 
 Let $G$ be a minimally $t$-tough, split graph with $t \le 1/2$. If $G$ is triangle-free, then it must be a tree, so $\tau(G) = 1/\Delta(G)$. Since $G$ is a split graph, it can have at most two non-leaf vertices. So $G$ is either a star or a double-star (i.e.\ two stars connected by their center vertices).
 
 Let us assume that there is a triangle in $G$. Since every split graph is chordal and every vertex in $I$ is simplicial, Theorem~\ref{thm:chordal} implies that every vertex in $I$ has degree $1$. It is not difficult to see that if such a graph is minimally $t$-tough with $t \le 1/2$, then $|Q| \le 3$. But since there is a triangle in the graph, $|Q| = 3$. Obviously, all vertices of $Q$ must have degree $b+1$.
\end{proof}

This theorem directly implies the following.

\begin{corollary}
 For any positive rational number $t$, the class of minimally $t$-tough, split graphs can be recognized in polynomial time.
\end{corollary}

\begin{corollary}
Let $t$ be a positive rational number. If $G$ is a minimally $t$-tough, split graph, then $G$ has a vertex of degree $\lceil 2t \rceil$.
\end{corollary}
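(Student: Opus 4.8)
The plan is to dispose of the two ranges of $t$ separately, using the two theorems of this section that precede the corollary. If $t > 1/2$, the non-existence theorem tells us there are no minimally $t$-tough split graphs at all, so the statement holds vacuously. Hence one may assume $t \le 1/2$; then $0 < 2t \le 1$, so $\lceil 2t \rceil = 1$, and it suffices to exhibit a vertex of degree $1$ in $G$.

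To do this I would invoke the structural characterization of minimally $t$-tough split graphs proved just above. It yields an integer $b$ with $t = 1/b$, which forces $b \ge 2$ since $t \le 1/2$, and it places $G$ in one of the two listed families. In the first family $G$ is a tree (with at most two internal vertices and $\Delta(G) = b \ge 2$), so $G$ has at least three vertices and therefore a leaf, i.e.\ a vertex of degree $1$. In the second family $|C| = 3$ and every vertex of the independent set $I$ has degree $1$; here one only needs $I \ne \emptyset$, which holds because otherwise $G = K_3$ would be complete and satisfy $\tau(G) = \infty \ne t$. In both cases $G$ has a vertex of degree $1 = \lceil 2t \rceil$, which is what we wanted.

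This is essentially a bookkeeping consequence of the classification, so I do not expect a genuine obstacle. The only points that need a moment's care are: recognizing that the range $t > 1/2$ contributes nothing and is handled vacuously rather than by an argument; checking that $\lceil 2t \rceil$ collapses to $1$ on the entire surviving range $0 < t \le 1/2$ (equivalently, on all admissible values $t = 1/b$ with $b \ge 2$); and ruling out the degenerate possibility $I = \emptyset$ in the second family — that is, that $G$ is complete — which would otherwise leave the graph without a degree-$1$ vertex.
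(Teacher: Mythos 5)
Your proof is correct and follows exactly the route the paper intends: the case $t>1/2$ is vacuous by the non-existence theorem, and for $t\le 1/2$ the characterization of minimally $t$-tough split graphs yields a vertex of degree $1=\lceil 2t\rceil$ (a leaf of the tree, or a vertex of $I\ne\emptyset$ in the $|C|=3$ case). The care you take with $I\ne\emptyset$ and with $\lceil 2t\rceil=1$ is exactly the bookkeeping the paper leaves implicit.
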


So Conjecture~\ref{conj:general_kriesell} holds for split graphs.

\section{Claw-free graphs}

In this section, we study claw-free graphs.

\begin{definition}
 The graph $K_{1,3}$ is called a {\em claw}. A graph is said to be {\em claw-free} if it does not contain a claw as an induced subgraph.
\end{definition}

By the following theorem, the toughness of claw-free graphs can be computed in polynomial time.

\begin{theorem}[\cite{article:clawfree}] \label{thm:toughness_of_clawfree}
 If $G$ is a noncomplete, claw-free graph, then $2\tau(G) = \kappa(G)$.
\end{theorem}

\begin{corollary}
 For any rational number $t > 0$, the class of $t$-tough, claw-free graphs can be recognized in polynomial time.
\end{corollary}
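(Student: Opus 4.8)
The plan is to reduce the recognition problem to computing the vertex connectivity, exploiting Theorem~\ref{clawfree}. Given an input graph $G$ together with the fixed rational number $t>0$, I would first test whether $G$ is claw-free. This can be done in polynomial time: it suffices to inspect every quadruple of vertices (equivalently, for each vertex $v$, to search its neighborhood for three pairwise nonadjacent vertices), which takes $O(n^4)$ time. If no induced $K_{1,3}$ is found, $G$ is claw-free; otherwise we reject, since then $G$ does not belong to the class in question regardless of its toughness.

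Assuming $G$ is claw-free, I would distinguish two cases. If $G$ is complete, then $\tau(G)=\infty\ge t$ by convention, so $G$ is $t$-tough and we accept. If $G$ is noncomplete, Theorem~\ref{clawfree} applies and gives $2\tau(G)=\kappa(G)$; consequently $G$ is $t$-tough (i.e.\ $\tau(G)\ge t$) if and only if $\kappa(G)\ge 2t$. The vertex connectivity $\kappa(G)$ can be computed in polynomial time by standard max-flow/min-cut computations between pairs of vertices, and since $2t$ is a fixed rational number, the inequality $\kappa(G)\ge 2t$ can then be checked directly. Putting the pieces together yields a polynomial-time decision procedure for membership in the class of $t$-tough claw-free graphs.

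I do not expect any genuine obstacle here: the entire substance of the corollary is contained in Theorem~\ref{clawfree}, which replaces the computation of toughness (NP-hard in general, and even harder for the minimality version as noted earlier) by the computation of connectivity (polynomial). The only place requiring a moment of care is the complete-graph case, where the toughness is infinite and Theorem~\ref{clawfree} does not literally apply, so it must be handled separately before invoking the theorem; the remaining steps are entirely routine.
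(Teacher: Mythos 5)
Your proof is correct and follows exactly the route the paper intends (the paper leaves the corollary without an explicit proof, as it is immediate from Theorem~\ref{clawfree}): test claw-freeness by brute force, handle the complete case separately, and otherwise reduce $\tau(G)\ge t$ to the polynomial-time check $\kappa(G)\ge 2t$.
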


The following lemma follows directly from the proof of Theorem~\ref{thm:toughness_of_clawfree}, which can be found as Theorem $10$ in the article by~\cite{article:clawfree}.

\begin{lemma} \label{lemma:structure_of_clawfree_graphs_with_tougness_t}
 Let $G$ be a noncomplete, claw-free graph and $S$ a tough set. Then the vertices of $S$ have neighbors in exactly two components of $G-S$, and the components of $G-S$ have exactly $2 \tau(G)$ neighbors (in $S$).
\end{lemma}

Here, we study minimally $t$-tough, claw-free graphs, where $t \le 1$ is a positive rational number. By Theorem~\ref{thm:toughness_of_clawfree}, we only need to consider the cases $t = 1/2$ and $t=1$. The latter case was already settled in our article~\cite{article:min1tough}.

\begin{theorem}[\cite{article:min1tough}] \label{thm:characterization_of_min_1_tough_clawfree}
 The class of minimally $1$-tough, claw-free graphs consists of the cycles of length at least $4$.
\end{theorem}

The main idea of this proof is that for each edge, there exists a vertex set guaranteed by Proposition~\ref{prop:minttoughlemma} of size at most $2$. Now in the case of $t = 1/2$, we show that there exist such vertex sets of size at most $1$. Unfortunately, this statement is not extendable to the case of $t = 3/2$; for an example see Figure~\ref{fig:clawfree_3/2_counterexample}.

\begin{figure}[ht]
\begin{center}
\begin{tikzpicture}[scale=1]
 \tikzstyle{vertex}=[draw,circle,fill=black,minimum size=4,inner sep=0]
 
 \node[vertex] (v1) at (90+0*72:0.75) {};
 \node[vertex] (v2) at (90+1*72:0.75) {};
 \node[vertex] (v3) at (90+2*72:0.75) {};
 \node[vertex] (v4) at (90+3*72:0.75) {};
 \node[vertex] (v5) at (90+4*72:0.75) {};
 
 \node[vertex] (u1) at (90+0*72:1.25) {};
 \node[vertex] (u2) at (90+1*72:1.25) {};
 \node[vertex] (u3) at (90+2*72:1.25) {};
 \node[vertex] (u4) at (90+3*72:1.25) {};
 \node[vertex] (u5) at (90+4*72:1.25) {};
 
 \node[vertex] (w1a) at (90-18+0*36:1.75) {};
 \node[vertex] (w1b) at (90-18+1*36:1.75) {};
 \node[vertex] (w2a) at (90-18+2*36:1.75) {};
 \node[vertex] (w2b) at (90-18+3*36:1.75) {};
 \node[vertex] (w3a) at (90-18+4*36:1.75) {};
 \node[vertex] (w3b) at (90-18+5*36:1.75) {};
 \node[vertex] (w4a) at (90-18+6*36:1.75) {};
 \node[vertex] (w4b) at (90-18+7*36:1.75) {};
 \node[vertex] (w5a) at (90-18+8*36:1.75) {};
 \node[vertex] (w5b) at (90-18+9*36:1.75) {};
 
 \draw[thick] (v4) -- (v5) -- (v1) -- (v2) -- (v3);
 \draw[thick] (v3) -- (v4) node [pos=0.5, shift={(0pt,-6pt)}] {$e$};
 \draw[thick] (v1) -- (v3) -- (v5) -- (v2) -- (v4) -- (v1);
 \draw[thick] (v1) -- (u1);
 \draw[thick] (v2) -- (u2);
 \draw[thick] (v3) -- (u3);
 \draw[thick] (v4) -- (u4);
 \draw[thick] (v5) -- (u5);
 \draw[thick] (w1a) -- (u1) -- (w1b);
 \draw[thick] (w2a) -- (u2) -- (w2b);
 \draw[thick] (w3a) -- (u3) -- (w3b);
 \draw[thick] (w4a) -- (u4) -- (w4b);
 \draw[thick] (w5a) -- (u5) -- (w5b);
 \draw[thick] (w1a) -- (w1b) -- (w2a) -- (w2b) -- (w3a) -- (w3b) -- (w4a) -- (w4b) -- (w5a) -- (w5b) -- (w1a);
\end{tikzpicture}
\caption{A minimally 3/2-tough, claw-free graph and its edge $e$ for which there exist no vertex sets $S=S(e)$ guaranteed by Proposition~\ref{prop:minttoughlemma} with $|S| \le 3$.}
\label{fig:clawfree_3/2_counterexample}
\end{center}
\end{figure}

\begin{lemma} \label{lemma:mintoughsets_in_min_1/2_tough_clawfree}
 If $G$ is a minimally $1/2$-tough, claw-free graph, then for every edge $e$ of $G$, there exists a vertex set $S = S(e) \subseteq V(G)$ guaranteed by Proposition~\ref{prop:minttoughlemma} with $|S| \le 1$.
\end{lemma}
\begin{proof}
 Let $e$ be an arbitrary edge of $G$ and $S=S(e)$ a vertex set guaranteed by Proposition~\ref{prop:minttoughlemma}. If $e$ is a bridge, then we can assume $S = \emptyset$, i.e.\ $|S| = 0$. So let us assume that $e$ is not a bridge. Then $S \ne \emptyset$ and $\omega(G-S) \le 2|S|$ and $\omega \big( (G-e)-S \big) > 2|S|$, which is only possible if $\omega(G-S) = 2|S|$ and $\omega \big( (G-e)-S \big) = 2|S| + 1$. Therefore, $S$ is a tough set, so by Lemma~\ref{lemma:structure_of_clawfree_graphs_with_tougness_t}, the vertices of $S$ have neighbors in exactly two components of $G-S$, and every component of $G-S$ has exactly one neighbor in $S$. Let $\{ v \} \subseteq V(G)$ be the neighborhood of the component of $G-S$ containing the edge $e$. Then $\{ v \}$ is a cutset, and its removal leaves exactly $2$ components, so it is a tough set having all the properties described in Proposition~\ref{prop:minttoughlemma}.
\end{proof}

This lemma has the following straightforward consequence.

\begin{corollary}
 The class of minimally $1/2$-tough, claw-free graphs can be recognized in polynomial time.
\end{corollary}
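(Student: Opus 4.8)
The plan is to exhibit an explicit polynomial-time recognition algorithm and to prove its correctness using Lemma~\ref{claw_mintoughset}. Given an input graph $G$, the algorithm runs three tests and accepts iff all of them succeed. \emph{Test 1:} $G$ is claw-free. \emph{Test 2:} $\tau(G) = 1/2$; by Theorem~\ref{clawfree} a noncomplete claw-free graph satisfies $2\tau(G) = \kappa(G)$, so this test is equivalent to checking that $G$ is noncomplete and $\kappa(G) = 1$, which is standard (alternatively, invoke the polynomial recognition of $1/2$-tough claw-free graphs together with that of $1$-tough ones). \emph{Test 3:} for every edge $e \in E(G)$, either $e$ is a bridge of $G$, or there exists a vertex $v \in V(G)$ with $\omega\big((G - e) - v\big) \ge 3$.

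Each test runs in polynomial time: Tests~1 and~2 are routine, and Test~3 loops over the at most $|E(G)|$ edges $e$ and the $|V(G)|$ candidate vertices $v$, computing each $\omega\big((G-e)-v\big)$ by a single graph traversal. For soundness, suppose $G$ passes all three tests. Then $G$ is claw-free and $\tau(G) = 1/2$, so it remains to check that $\tau(G-e) < 1/2$ for every edge $e$. If $e$ is a bridge, then $G-e$ is disconnected while $G$ is connected (as $\tau(G)=1/2$), so $\tau(G-e) < 1/2$. Otherwise $G-e$ is connected and Test~3 supplies a vertex $v$ with $\omega\big((G-e)-v\big) \ge 3 > 2 = |\{v\}|/(1/2)$, so the single-vertex cutset $\{v\}$ witnesses $\tau(G-e) < 1/2$. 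Hence $G$ is minimally $1/2$-tough and claw-free.

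For completeness, suppose $G$ is a minimally $1/2$-tough, claw-free graph; then Tests~1 and~2 obviously pass. Fix an edge $e$. By Lemma~\ref{claw_mintoughset} there is a set $S = S(e)$ with $|S| \le 1$ satisfying the conclusion of Claim~\ref{minttoughlemma}. If $S = \emptyset$, then by Claim~\ref{minttoughlemma} the edge $e$ is a bridge of $G$. If $S = \{v\}$, then $e$ is a bridge in $G - v$ and $\omega\big((G-e)-v\big) > |\{v\}|/(1/2) = 2$, hence $\omega\big((G-e)-v\big) \ge 3$. In either case the condition of Test~3 holds for $e$, so Test~3 passes. Therefore the algorithm accepts $G$ precisely when $G$ is a minimally $1/2$-tough, claw-free graph, and it runs in polynomial time. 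The only genuine subtlety here — and exactly what Lemma~\ref{claw_mintoughset} is for — is that the class of claw-free graphs is not closed under edge deletion, so one cannot simply run the claw-free toughness algorithm on each $G-e$; the lemma circumvents this by guaranteeing that whenever $G$ is minimally $1/2$-tough the witnesses to $\tau(G-e) < 1/2$ can be taken to be cutsets of size at most $1$, which are trivial to enumerate.
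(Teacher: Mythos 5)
Your proposal is correct and follows the route the paper intends: the corollary is stated as an immediate consequence of Lemma~\ref{claw_mintoughset} (size-at-most-one witness sets) together with the polynomial computability of toughness for claw-free graphs via Theorem~\ref{clawfree}, and your three-test algorithm simply makes that argument explicit. Both the soundness and completeness directions are argued correctly, so nothing is missing.
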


The following lemma describes another interesting property of minimally $1/2$-tough, claw-free graphs.

\begin{lemma} \label{lemma:cycles_in_min_1/2_tough_clawfree}
 If $G$ is a minimally $1/2$-tough, claw-free graph, then all of its cycles have length $3$.
\end{lemma}
\begin{proof}
 If every edge of $G$ is a bridge, then $G$ is a tree, so it does not contain any cycles.
 
 Let us assume that there exists an edge $e=uv$, which is not a bridge. Then by Lemma~\ref{lemma:mintoughsets_in_min_1/2_tough_clawfree}, there exists a vertex $w$ which is a cut-vertex in $G$ such that $e$ is a bridge in $G - \{ w \}$. Let $L_1$ and $L_2$ be the components of $G - \{ w \}$ (note that since $G$ is $1/2$-tough, $G - \{ w \}$ has exactly 2 components). Without loss of generality, we can assume $u, v \in L_1$. Let $L_{1,1}$ and $L_{1,2}$ denote the components of $(G-e) - \{ w \}$ containing $u$ and~$v$, respectively. Since $G$ is connected and $e$ is not a bridge in $G$, the vertex $w$ has neighbors in $L_{1,1}$, $L_{1,2}$, and~$L_2$. Since $G$ is claw-free and $w$ has neighbors in $L_2$, the neighbors of $w$ in $L_1$ must span a clique. Since $e$ is a bridge in $L_1$, and $w$ has neighbors both in $L_{1,1}$ and $L_{1,2}$, it follows that $w$ has exactly one neighbor in $L_{1,1}$, namely $u$, and has exactly one neighbor in $L_{1,2}$, namely $v$. Since $e$ is not a bridge in~$G$, there exists a cycle containing the edge $e$, but then this cycle must also contain the vertex $w$. By the previous observations, this cycle must be $\{ u,v,w \}$.
 
 This means that in $G$ every cycle has length $3$.
\end{proof}

Now we are ready to characterize minimally $1/2$-tough, claw-free graphs.

\begin{theorem} \label{thm:characterization_of_min_1/2_tough_clawfree}
The class of minimally $1/2$-tough, claw-free graphs consists of exactly those graphs that can be built up in the following way.
 \begin{enumerate}
  \item Take a tree $T$ with maximum degree at most 3 where the set of vertices of degree 1 and 3 together form an independent set.
  \item Now delete every vertex of degree 3, but connect its 3 neighbors with a triangle.
 \end{enumerate}
\end{theorem}
\begin{proof}
 Let $G$ be a graph that can be obtained as described in the theorem. It is easy to see that $G$ is claw-free. If $G$ does not contain a triangle, then it must be a path on at least 3 vertices, which is clearly minimally $1/2$-tough. If $G$ contains some triangles, then by the construction of $G$, when removing any set $S$ of vertices, the removal of each vertex of $S$ creates at most one more new component, so $G$ is $1/2$-tough. On the other hand, if an edge of a triangle is deleted, then by removing the third vertex of this triangle, we obtain 3 components since the vertices of the triangle were not leaves in the original tree $T$. All the other edges of the graph are bridges, so the graph is minimally $1/2$-tough.
 
 Now we show that if a graph $G$ is claw-free and minimally $1/2$-tough, then it can be obtained as described in the theorem.
 
 \emph{Case 1:} $G$ is a tree.
 
 Since $G$ is a claw-free tree, it cannot have a vertex of degree at least $3$, so $G$ must be a path. A path on $2$ vertices is $K_2$, whose toughness is infinity, so it is not minimally $1/2$-tough. A path on at least $3$ vertices is obtained by setting $T$ to be exactly this path. (In this case, Step 2 of the construction does not change~$T$.) 
 
 \emph{Case 2:} $G$ is not a tree.
 
 By Lemma~\ref{lemma:mintoughsets_in_min_1/2_tough_clawfree}, every vertex of every triangle is a cut-vertex. By Lemma~\ref{lemma:cycles_in_min_1/2_tough_clawfree}, two triangles cannot share an edge. Since $G$ is claw-free, any vertex not contained in any triangle has degree 1 or 2. By Lemma~\ref{lemma:cycles_in_min_1/2_tough_clawfree}, if a vertex is not contained in any triangle, then it is not contained in any cycle; thus it is either of degree~1 or is a cut-vertex. Now apply the reverse of the operation given in Step 2 (i.e., for each triangle, remove its edges, add a new vertex and connect it with the vertices of the triangle). Let us call the newly added vertices red, the vertices of the triangles green, and the other vertices blue. Since $G$ does not contain any cycles except for the triangles, the resulting graph must be a tree. Clearly, the red vertices have degree 3 and the blue ones have degree 1 or 2 in the tree. Now we show that the green vertices also have degrees 1 or 2 in the tree. Since the green vertices are contained in a triangle of $G$, they are cut-vertices. Since the toughness of $G$ is $1/2$, it is not difficult to see that the green vertices have degree 1 or 2 in the tree. So the red vertices have degree 3, and all the other vertices have degree 1 or 2 in the tree.

 To complete the proof, we need to show that the set of vertices of degrees 1 and 3 in the tree together form an independent set. Two leaves of the tree cannot be adjacent since the graph is connected and has at least 3 vertices. Two vertices of degree 3 of the tree (i.e.\ red vertices) cannot be adjacent because of the way they were created. Two vertices of degrees 1 and 3 of the tree cannot be adjacent since that would mean that this leaf of the tree is contained in a triangle in $G$ and has no neighbors outside the triangle, which contradicts the fact that it is a cut-vertex in $G$.
\end{proof}

For an example to obtain a minimally $1/2$-tough, claw-free graph described in Theorem~\ref{thm:characterization_of_min_1/2_tough_clawfree}, see Figure~\ref{fig:min_1/2_clawfree}.

\begin{figure}[ht]
\centering
\begin{tikzpicture}[scale=0.75]
 \tikzstyle{vertex}=[draw,circle,fill=black,minimum size=4,inner sep=0]
 
 \node[vertex] (o) at (0,0) {};
 
 \node[vertex] (a1) at (30:0.75) {};
 \node[vertex] (a2) at (150:0.75) {};
 \node[vertex] (a3) at (270:0.75) {};
 
 \node[vertex] (b1) at (30:1.5) {};
 \coordinate (b2) at (150:1.5) {};
 \node[vertex] (b3) at (270:1.5) {};
 
 \begin{scope}[shift={(b2)}]
 \node[vertex] (c1) at (90:0.75) {};
 \node[vertex] (c2) at (210:0.75) {};
 
 \node[vertex] (d1) at (90:1.5) {};
 \node[vertex] (d2) at (210:1.5) {};
 \end{scope}
 
 \coordinate (c3) at (30:2.25) {};
 
 \begin{scope}[shift={(c3)}]
 \node[vertex] (e1) at (90:0.75) {};
 \node[vertex] (e2) at (330:0.75) {};
 
 \node[vertex] (f1) at (90:1.5) {};
 \node[vertex] (f2) at (330:1.5) {};
 \end{scope}
 
 \draw[thick] (a1) -- (b1);
 \draw[thick] (a3) -- (b3);
 \draw[thick] (c1) -- (d1);
 \draw[thick] (c2) -- (d2);
 \draw[thick] (e1) -- (f1);
 \draw[thick] (e2) -- (f2);
 
 \draw[thick] (o) -- (a1);
 \draw[thick] (o) -- (a2);
 \draw[thick] (o) -- (a3);
 \node[vertex] at (150:1.5) {};
 \draw[thick] (b2) -- (a2);
 \draw[thick] (b2) -- (c1);
 \draw[thick] (b2) -- (c2);
 \node[vertex] at (30:2.25) {};
 \draw[thick] (c3) -- (b1);
 \draw[thick] (c3) -- (e1);
 \draw[thick] (c3) -- (e2);
 
 \node at (5,0) {$\longrightarrow$};
 
 \begin{scope}[shift={(10,0)}]
 \node[vertex] (a1) at (30:0.75) {};
 \node[vertex] (a2) at (150:0.75) {};
 \node[vertex] (a3) at (270:0.75) {};
 
 \node[vertex] (b1) at (30:1.5) {};
 \coordinate (b2) at (150:1.5);
 \node[vertex] (b3) at (270:1.5) {};
 
 \begin{scope}[shift={(b2)}]
 \node[vertex] (c1) at (90:0.75) {};
 \node[vertex] (c2) at (210:0.75) {};
 
 \node[vertex] (d1) at (90:1.5) {};
 \node[vertex] (d2) at (210:1.5) {};
 \end{scope}
 
 \coordinate (c3) at (30:2.25);
 
 \begin{scope}[shift={(c3)}]
 \node[vertex] (e1) at (90:0.75) {};
 \node[vertex] (e2) at (330:0.75) {};
 
 \node[vertex] (f1) at (90:1.5) {};
 \node[vertex] (f2) at (330:1.5) {};
 \end{scope}
 
 \draw[thick] (a1) -- (b1);
 \draw[thick] (a3) -- (b3);
 \draw[thick] (c1) -- (d1);
 \draw[thick] (c2) -- (d2);
 \draw[thick] (e1) -- (f1);
 \draw[thick] (e2) -- (f2);
 
 \draw[thick] (a1) -- (a2) -- (a3) -- (a1);
 \draw[thick] (a2) -- (c1) -- (c2) -- (a2);
 \draw[thick] (b1) -- (e1) -- (e2) -- (b1);
 \end{scope}
\end{tikzpicture}
\caption{Creating a minimally $1/2$-tough, claw-free graph from a tree according to Theorem~\ref{thm:characterization_of_min_1/2_tough_clawfree}.}
\label{fig:min_1/2_clawfree}
\end{figure}

With the help of this characterization, we can easily determine the minimum degree of minimally $1/2$-tough, claw-free graphs.

\begin{corollary} \label{cor:mindegree_of_min_1/2_tough_clawfree}
 Every minimally $1/2$-tough, claw-free graph has a vertex of degree $1$.
\end{corollary}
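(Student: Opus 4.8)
The plan is to read this off the structural description in Theorem~\ref{feltoughclawfree}. A minimally $1/2$-tough graph is noncomplete (complete graphs have toughness $\infty$) and connected, hence has at least three vertices. By the theorem, $G$ is obtained from a tree $T$ with $\Delta(T)\le 3$ in which the vertices of degree $1$ and $3$ together form an independent set, by deleting each degree-$3$ vertex and joining its three neighbours into a triangle. Since that step removes only vertices and adds none, $|V(T)| = |V(G)| + d$, where $d$ is the number of degree-$3$ vertices of $T$; in particular $|V(T)| \ge 3$, so $T$ is neither a single vertex nor $K_2$ and therefore has at least two leaves.

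Next I would chase a leaf $\ell$ of $T$ through step~2. Let $m$ be the unique neighbour of $\ell$ in $T$. Since $|V(T)| \ge 3$, the vertex $m$ is not a leaf; and since $\deg_T(\ell) = 1$ while the vertices of degree $1$ and $3$ are pairwise nonadjacent, $m$ does not have degree $3$. Hence $\deg_T(m) = 2$. Now observe that step~2 deletes only degree-$3$ vertices and adds edges only among pairs of neighbours of a (deleted) degree-$3$ vertex. As $\ell$ has degree $1$ and $m$ has degree $2$, neither of them is deleted, so the edge $\ell m$ survives; moreover $\ell$'s only neighbour $m$ is not a degree-$3$ vertex, so $\ell$ never appears as a corner of one of the new triangles and gains no new edge. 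Therefore $\deg_G(\ell) = 1$, which proves the corollary.

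There is essentially no obstacle once Theorem~\ref{feltoughclawfree} is available: the only points needing care are excluding the degenerate case $T = K_2$ (equivalently $G = K_2$, whose toughness is $\infty$) and tracking precisely which vertices and edges step~2 touches. If one wished to avoid the full characterization, one could instead argue directly from Lemma~\ref{claw_mintoughset} and Lemma~\ref{claw_cycles} --- if $G$ is a tree it is a path by claw-freeness, whose endpoints have degree $1$, and otherwise one uses a cut-vertex supplied by Lemma~\ref{claw_mintoughset} and descends into a component of its removal --- but this merely re-derives the same structure, so invoking Theorem~\ref{feltoughclawfree} is the economical route.
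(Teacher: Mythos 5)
Your proposal is correct and follows essentially the same route as the paper: invoke Theorem~\ref{feltoughclawfree}, take a leaf of the tree $T$, and observe that by the independence condition it cannot be adjacent to a degree-$3$ vertex, so step~2 leaves it untouched and it has degree $1$ in $G$. The extra bookkeeping you add (ruling out $T=K_2$, tracking which vertices step~2 deletes) only makes explicit what the paper leaves implicit.
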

\begin{proof}
Theorem~\ref{thm:characterization_of_min_1/2_tough_clawfree} shows that every minimally $1/2$-tough, claw-free graph is obtained from a tree, which clearly contains a leaf. Also in the starting tree, the leaves cannot be adjacent to any vertex of degree 3, so the applied operation (i.e. Step 2) does not affect them.
\end{proof}

Since by Theorem~\ref{thm:toughness_of_clawfree}, the toughness of any claw-free graph is either an integer or half of an integer, Conjecture~\ref{conj:general_kriesell} is true in the class of claw-free graphs for all positive rational number $t \le 1$ by Corollary~\ref{cor:mindegree_of_min_1/2_tough_clawfree} and Theorem~\ref{thm:characterization_of_min_1_tough_clawfree}.

\section{\texorpdfstring{$2K_2$}{}-free graphs}

Finally, we study minimally tough, $2K_2$-free graphs.

\begin{definition}
 A graph is said to be $2K_2$-free if it does not contain an independent pair of edges as an induced subgraph.
\end{definition}

\begin{theorem}[\cite{article:2K2}] \label{article:2K2}
 For any positive rational number $t$, the class of $t$-tough, $2K_2$-free graphs can be recognized in polynomial time.
\end{theorem}

The following claim is obvious from the definitions.

\begin{claim} \label{claim:2K2_isolated_vertices}
 Let $G$ be a $2K_2$-free graph and $S \subseteq V(G)$ a cutset. Then in $G-S$, there is at most one component of size at least two.
\end{claim}

Now we prove that for every edge of a minimally $t$-tough, $2K_2$-free graph, there exists a vertex set guaranteed by Proposition~\ref{prop:minttoughlemma} with some nice properties.

\begin{lemma} \label{lemma:2K2_mintoughset}
 Let $t$ be a positive rational number, $G$ a minimally $t$-tough, $2K_2$-free graph and let $e = uv$ be an arbitrary edge of $G$. Then there exists a vertex set $S = S(e) \subseteq V(G)$ guaranteed by Proposition~\ref{prop:minttoughlemma} such that $|S|=1$ or $S$ is contained in the open neighborhood of $\{ u, v \}$, i.e.\ in the set of vertices adjacent to $u$ or $v$, excluding $u$ and $v$ themselves.
\end{lemma}
\begin{proof}
 Let $S$ be a vertex set guaranteed by Proposition~\ref{prop:minttoughlemma} and let us assume that there exists a vertex $w \in S$ such that $uw, vw \not\in E(G)$. Now we prove that $S=\{w\}$ or that there exists a vertex set $S'$ guaranteed by Proposition~\ref{prop:minttoughlemma} and a vertex $w' \notin S$ contained in the open neighborhood of $\{u,v\}$ for which $S' = \big( S \setminus \{w\} \big) \cup \{w'\}$.
 
 Let $L_u$ and $L_v$ denote the two components of $(G-e)-S$ for which $u \in L_u$ and $v \in L_v$. By Claim~\ref{claim:2K2_isolated_vertices}, all the components of $G-S$ are isolated vertices except for the component of the edge $e$. Since $G$ is $2K_2$-free, these isolated vertices cannot be adjacent to $w$. We can assume that $w$ has neighbors both in $L_u$ and $L_v$, otherwise we can consider $\widetilde{S} = S \setminus \{ w \}$ instead of $S$, see Figure~\ref{label}. Since $w$ is not adjacent to either $u$ or $v$, both $L_u$ and $L_v$ must have size at least two. Since $G$ is $2K_2$-free, both $L_u$ and $L_v$ are stars. (If there were any other edge in $L_u$, then this edge and any edge induced by $L_v$ would be independent.)
 
 \begin{figure}[ht]
 \begin{center}
 \begin{tikzpicture}[scale=1.25]
 \tikzstyle{vertex}=[draw,circle,fill=black,minimum size=4,inner sep=0]
 
 \node[vertex] at (-1.5,0) {};
 \node[vertex] at (-0.5,0) {};
 \node[vertex] (w) at (0.5,0) [label={[label distance=-3]315:$w$}] {};
 \node[vertex] at (1.5,0) {};
 
 \draw[rounded corners=5] (-1.75,-0.35) rectangle (1.75,0.35);
 \node at (2.1,-0.25) {$S$};
 
 \draw[thick] (1.75,1.5) ellipse (0.65 and 0.5);
 \draw[thick] (-0.25,1.5) ellipse (0.65 and 0.5);
 \node[vertex] (K1) at (-1.5,1.5) {};
 \node[vertex] (K2) at (-2,1.5) {};
 \node[vertex] (K3) at (-2.5,1.5) {};
 
 \node at (-0.75,2.25) {$L_u$};
 \node at (2.25,2.25) {$L_v$};
 
 \node[vertex] (u) at (0.25,1.5) [label={[label distance=-2.5]180: $u$}] {};
 \node[vertex] (v) at (1.25,1.5) [label={[label distance=-3]0:$v$}] {};
 
 \draw[thick] (u) to [bend left=25] node[pos=0.5, above] {$e$} (v);
 
 \draw[thick, dashed] (w) -- (K1);
 \draw[thick, dashed] (w) -- (K2);
 \draw[thick, dashed] (w) -- (K3);
 \draw[thick, dashed] (w) -- (u);
 \draw[thick, dashed] (w) -- (v);
 
 \node[vertex] (u1) at (-0.35,1.5) {};
 \node[vertex] (v1) at (1.85,1.5) {};
 
 \draw[thick] (w) -- (u1);
 \draw[thick] (w) -- (v1);
\end{tikzpicture}
\caption{The set $S = S(e)$ guaranteed by Proposition~\ref{prop:minttoughlemma} in a minimally $2K_2$-free graph with a vertex $w \in S$ which is not adjacent to any of the endpoints of $e$.}
\label{label}
\end{center}
\end{figure}
 
 Clearly, $S \cup \{ u \}$ is a cutset in $G$, and thus
 \[ \omega(G-S) + |L_u| - 1 = \omega \big( G - (S \cup \{u\}) \big) \le \frac{|S \cup \{ u \}|}{t} = \frac{|S|+1}{t} \text{.} \]
 Obviously, the same holds for $S \cup \{ v \}$. By Proposition~\ref{prop:minttoughlemma}, we obtain
 \[ \omega(G-S) = \omega((G-e)-S)-1 > \frac{|S|}{t} - 1 \text{.} \]
 Therefore,
 \[ \left( \frac{|S|}{t} - 1 \right) + \big( |L_u| - 1 \big) < \omega(G-S) + \big( |L_u| - 1 \big) \le \frac{|S|+1}{t} \]
 holds, i.e. $|L_u| < 2 + 1/t$. Similarly, we have $|L_v| < 2 + 1/t$.
 
 \medskip
 
 \emph{Case 1:} $S$ is a cutset in $G$.
 
 Then $S \setminus \{ w \}$ is also a cutset in $G$ (since $w$ has neighbors only in that component of $G-S$ which contains the edge $e$), so
 \[ \omega(G-S) = \omega \big( G - (S \setminus \{ w \}) \big) \le \frac{|S \setminus \{ w \}|}{t} = \frac{|S|-1}{t} \text{.} \]
 By Proposition~\ref{prop:minttoughlemma}, we obtain
 \[ \frac{|S|}{t} < \omega \big( (G-e) - S \big) = \omega(G-S) + 1 \le \frac{|S|-1}{t} + 1 \text{.} \]
 Therefore, $t > 1$ holds. So $|L_u| < 2+1/t < 3$, i.e.\ $|L_u| \le 2$. Hence, $L_u = \{ u, u_1 \}$ and similarly, $L_v = \{ v, v_1 \}$. So $L_u \cup L_v \cup \{ w \}$ spans a cycle of length $5$, namely $wu_1uvv_1w$. Therefore, $S' = \big( S \setminus \{ w \} \big) \cup \{ u_1 \}$ has the required properties.
 
 \medskip
 
 \emph{Case 2:} $S$ is not a cutset in $G$.
 
 \smallskip
 
 \emph{Case 2.1:} ($S$ is not a cutset in $G$ and) $t>1$.
 
 Then $|L_u| < 2 + 1/t <3$, so similarly as in Case 1, we obtain that $L_u = \{ u, u_1 \}$, $L_v = \{ v, v_1 \}$ and $S' = \big( S \setminus \{ w \} \big) \cup \{ u_1 \}$ has the required properties.
 
 \smallskip
 
 \emph{Case 2.2:} ($S$ is not a cutset in $G$ and) $t \le 1$.
 
 Then $|S|=1$ must hold, otherwise $S \setminus \{ w \} \ne \emptyset$, so by Proposition~\ref{prop:obs_below_1},
  \[ 1 = \omega(G-S) = \omega \Big( G- \big( S \setminus \{ w \} \big) \Big) \le \frac{\big| S \setminus \{ w \} \big|}{t} = \frac{|S|-1}{t} \text{,} \]
 and so by Proposition~\ref{prop:minttoughlemma},
  \[ \frac{|S|}{t} < \omega \big( (G-e)-S \big) = \omega(G-S) + 1 \le \frac{|S|-1}{t} + 1 \text{,} \]
 which would imply $t > 1$, a contradiction.
\end{proof}

\begin{theorem}
 For any positive rational number $t$, the class of minimally $t$-tough, $2K_2$-free graphs can be recognized in polynomial time.
\end{theorem}
\begin{proof}
 By Theorem~\ref{article:2K2}, we can compute the toughness of a $2K_2$-free graph in polynomial time. We only need to examine whether the removal of any edge from the graph decreases the toughness.
 
 Let $e = uv$ be an arbitrary edge of the graph. If $e$ is a bridge, then its removal from $G$ obviously decreases the toughness. Let us assume that $e$ is not a bridge. If $G$ is minimally $t$-tough, then by Proposition~\ref{prop:minttoughlemma}, there exists $S = S(e) \subseteq V(G)$ for which $\omega \big( (G-e) - S \big) > |S|/t$, and by Theorem~\ref{lemma:2K2_mintoughset}, we can assume that $|S|=1$ or that $S$ is contained in the open neighborhood of $\{ u,v \}$. First, check whether there exists a cut-vertex in $G-e$ whose removal from $G-e$ leaves more than $1/t$ components (this can be clearly done in polynomial time). If it does, then the removal of $e$ from $G$ obviously decreases the toughness. If it does not, then start a BFS algorithm in $G$ at $u$ and $v$ simultaneously (this can be also done in polynomial time). Since $G$ is $2K_2$-free, the BFS tree has at most two levels (not counting the zeroth level containing $u$ and $v$) and inside the second level there are no edges. If $G$ is minimally $t$-tough, then the above mentioned set $S$ exists and (since $|S| \ne 1$) it is contained in the open neighborhood of $\{ u, v \}$, i.e.\ in the first level of this BFS tree. In addition, by Claim~\ref{claim:2K2_isolated_vertices}, all the components of $G-S$ are isolated vertices except for the component of the edge $e$, thus every vertex in the first level belongs either to $S$ or to the component of $e$. Therefore, if we remove the edge $e$ from $G$ and expand the first level into a clique by adding all necessary edges, then the toughness of the obtained split graph is equal to the toughness of $G-e$. By Theorem~\ref{thm:split_toughness}, we can compute the toughness of this split graph in polynomial time and check if it is less than $t$.
 
 So we can decide in polynomial time whether a given $2K_2$-free graph is minimally tough.
\end{proof}

Finally, we give a few examples of minimally tough, $2K_2$-free graphs:
\begin{itemize}
 \item[--] the graphs $C_4$ and $C_5$ are minimally $1$-tough, $2K_2$-free graphs,
 \item[--] for any integer $b \ge 2$, the graph $K_{1,b}$ is a minimally $1/b$-tough, $2K_2$-free graph,
 \item[--] the path on 4 vertices is a minimally $1/2$-tough, $2K_2$-free graph,
 \item[--] the two graphs in Figure~\ref{fig:2K2free_2/3_example} are minimally $2/3$-tough, $2K_2$-free graphs.
\end{itemize}

\begin{figure}[ht] 
\begin{center}
\begin{tikzpicture}
 \tikzstyle{vertex}=[draw,circle,fill=black,minimum size=4,inner sep=0]
 
 \node[vertex] (1) at (90:1) {};
 \node[vertex] (2) at (90+72:0.75) {};
 \node[vertex] (3) at (90+2*72:1) {};
 \node[vertex] (4) at (90+3*72:1) {};
 \node[vertex] (5) at (90+4*72:0.75) {};
 \node[vertex] (2a) at (90+72:1.25) {};
 \node[vertex] (5a) at (90+4*72:1.25) {};
 
 \draw[thick] (1) -- (2) -- (3) -- (4) -- (5) -- (1);
 \draw[thick] (1) -- (2a) -- (3);
 \draw[thick] (1) -- (5a) -- (4);
 
 \begin{scope}[shift={(5,0)}]
  \node[vertex] (1) at (90:1) {};
 \node[vertex] (2) at (90+72:1) {};
 \node[vertex] (3) at (90+2*72:1) {};
 \node[vertex] (4) at (90+3*72:1) {};
 \node[vertex] (5) at (90+4*72:0.75) {};
 \node[vertex] (5a) at (90+4*72:1.25) {};
 
 \draw[thick] (1) -- (2) -- (3) -- (4) -- (5) -- (1);
 \draw[thick] (1) -- (5a) -- (4);
 \end{scope}
\end{tikzpicture}
\caption{Two minimally $2/3$-tough, $2K_2$-free graphs.}
\label{fig:2K2free_2/3_example}
\end{center}
\end{figure}

Conjecture~\ref{conj:general_kriesell} remains open for minimally tough, $2K_2$-free graphs.

\acknowledgements
We would like to thank Binlong Li for the construction in Figure~\ref{fig:clawfree_3/2_counterexample}.

\nocite{*}
\bibliographystyle{abbrvnat}
\bibliography{special_graph_classes}
\label{sec:biblio}

\end{document}